\UseRawInputEncoding
\documentclass[12pt]{amsart}
\usepackage{cases}
\usepackage{amscd}
\setlength{\textwidth}{6 in} \setlength{\textheight}{8.6 in}
\hoffset=-46pt
\usepackage{amsmath}
\usepackage{amsthm}
\usepackage{amssymb}
\usepackage{mathrsfs}
\usepackage{amsfonts}
\usepackage{color}

\usepackage{pifont}

\usepackage{upgreek}
\usepackage{bm}


\usepackage{indentfirst, latexsym, bm, amsmath, eufrak, amsthm}
\usepackage{amsmath}
\usepackage{amsthm}
\usepackage{amssymb}
\usepackage{mathrsfs}
\usepackage{amsfonts}

\usepackage{pifont}

\usepackage{upgreek}
\usepackage{bm}
\setlength{\parskip}{0pt}

\newcommand*{\tr}{\mathrm{tr}}
\numberwithin{equation}{section}
\newtheorem{theo}{Theorem} 

\newtheorem{lem}{Lemma}
\newtheorem{mcor}{Corollary}
\newtheorem{remark}{Remark}
\newtheorem{prop}{Proposition}


\newcommand*{\D}[1]{\ensuremath{\nabla^{#1}}}

\begin{document}
\title{Properties of Berwald scalar curvature}

\author[Ming Li]{Ming Li$^1$}
\author[Lihong Zhang]{Lihong Zhang}



\address{Ming Li: Mathematical Science Research Center,
Chongqing University of Technology,
Chongqing 400054, P. R. China }

\email{mingli@cqut.edu.cn}

\address{Lihong Zhang: School of Science,
Chongqing University of Technology,
Chongqing 400054, P. R. China }

\email{1770218088@qq.com}


\thanks{$^1$~Partially supported by NSFC (Grant No. 11501067, 11571184, 11871126) }

\maketitle

\begin{abstract}
In this short paper, we prove that a Finsler manifold with vanishing Berwald scalar curvature has zero $\mathbf{E}$-curvature. As a consequence, Landsberg manifolds with vanishing Berwald scalar curvature are Berwald manifolds. This improves a previous result in \cite{Li}. For $(\alpha,\beta)$-metrics on manifold of dimension greater than 2, if the mean Landsberg curvature and the Berwald scalar curvature both vanish, the Berwald curvature also vanishes.
\end{abstract}


\section*{Introduction}

Let $(M,F)$ be an $n$-dimensional Finsler manifold. Let $SM=\{F(x,y)=1\}$ be the unit sphere bundle (or indicatrix bundle) with the natural projection $\pi:SM\to M$.
Let $\omega=F_{y^i}dx^i$ be the Hilbert form, which defines a contact structure on $SM$. The dual of $\omega$ with respect to the Sasaki-type metric on $SM$ is the Reeb vector filed $\xi$. It is verified that $\xi$ is the restriction of the spray $\mathbf{G}$ on $SM$. Let $\mathcal{D}$ be the contact distribution $\{\omega=0\}$. The Berwald curvature $\mathbf{B}$ as a part of the curvature endomorphism of the Berwald connection is divided into four parts. Along the Reeb vector filed direction $\mathbf{B}(\xi)=0$, however the transpose of $\mathbf{B}$ along $\omega$ gives twice the Landsberg curvature $\mathbf{B}^{t}(\omega)=2\mathbf{L}$. The remain part of $\mathbf{B}$ on $\mathcal{D}$ is in general not a symmetric endomorphism. $\mathbf{B}$ is totally symmetric on $\mathcal{D}$ when $\mathbf{L}=0$. $M$ is called a Berwald manifold if $\mathbf{B}=0$.

Let $\mathbf{E}=F\cdot\tr\mathbf{B}$ be the mean Berwald curvature or the $\mathbf{E}$-curvature. There are examples with vanishing $\mathbf{E}$-curvature which are not Berwald manifolds. In \cite{Li}, we proved that $\mathbf{L}=0$ and $\mathbf{E}=0$ implies $\mathbf{B}=0$.

It is natural to consider the scalar $\mathsf{e}:=\tr \mathbf{E}$, which will be called the \emph{Berwald scalar curvature}. In general $\mathsf{e}$ is a function on $SM$, and the symmetric tensor $\mathbf{E}$ is not isotropic. The main result of this paper is the following property about the Berwald scalar curvature.

\begin{theo}\label{theo 1}
 Let $(M,F)$ be an $n$-dimensional Finsler manifold. If $\mathsf{e}=\tr\mathbf{E}$ is a function on $M$, then $\mathbf{E}$ is isotropic, $$\mathbf{E}=\frac{1}{n-1}\mathsf{e}\mathbf{h}(\cdot,J\cdot),$$
 where $\mathbf{h}$ is the angular metric, $J$ the almost complex structure on $\mathcal{D}$. In this case, the Finsler manifold has weak isotropic $\mathbf{S}$-Curvature.
\end{theo}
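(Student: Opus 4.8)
The plan is to reduce the statement to a fact about the $\mathbf{S}$-curvature function and then to an eigenvalue problem on the indicatrix. First I would recall that, up to the factor $F$, the mean Berwald curvature is the vertical Hessian of the $\mathbf{S}$-curvature: writing $\mathbf{S}$ for the $\mathbf{S}$-curvature (a fiberwise $1$-homogeneous function whose two vertical derivatives are independent of the chosen volume form), one has $E_{ij}=\tfrac{F}{2}\,\mathbf{S}_{y^iy^j}$ on $SM$, so that $E_{ij}y^j=0$, $E_{ij}$ is $0$-homogeneous, and $\mathsf{e}=g^{ij}E_{ij}=\tfrac{F}{2}\,g^{ij}\mathbf{S}_{y^iy^j}$. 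The target formula $\mathbf{E}=\tfrac{1}{n-1}\mathsf{e}\,\mathbf{h}(\cdot,J\cdot)$ is, in coordinates, $E_{ij}=\tfrac{1}{n-1}\mathsf{e}\,h_{ij}$, which is exactly the vertical Hessian of $\tfrac{2\mathsf{e}}{n-1}F$. Hence it suffices to prove that the hypothesis forces $\mathbf{S}=\tfrac{2\mathsf{e}(x)}{n-1}F+\eta_i(x)y^i$; this single identity yields both the isotropy of $\mathbf{E}$ and, by definition, the weak isotropic $\mathbf{S}$-curvature. Equivalently, setting $\mathbf{T}:=\mathbf{S}-\tfrac{2\mathsf{e}}{n-1}F$, I must show that a $1$-homogeneous $\mathbf{T}$ with $g^{ij}\mathbf{T}_{y^iy^j}\equiv 0$ is linear in $y$.

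The geometric observation I would use is that the fundamental tensor turns $T_xM\setminus\{0\}$ into the metric cone $dr^2+r^2\mathbf{h}$ over the indicatrix $(I_x,\mathbf{h})$, with $r=F$ and $y/F$ the $g$-unit normal. The cone Laplacian formula then gives, for the $1$-homogeneous $\mathbf{T}=r\,\tau$ with $\tau:=\mathbf{T}|_{I_x}$, the restriction $g^{ij}\mathbf{T}_{y^iy^j}|_{I_x}=\Delta_{\mathbf{h}}\tau+(n-1)\tau+I^a\tau_{,a}$, where $I_a=g^{ij}C_{ija}=\partial_{y^a}\log\sqrt{\det g}$ is the mean Cartan torsion, which is tangent to $I_x$ since $C_{ijk}y^k=0$. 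Thus $g^{ij}\mathbf{T}_{y^iy^j}\equiv 0$ is, on the $(n-1)$-dimensional indicatrix, the equation $\Delta_{\mathbf{h}}\tau+(n-1)\tau+I^a\tau_{,a}=0$. The number $n-1$ is precisely the first nonzero eigenvalue of the round sphere $S^{n-1}$, whose eigenspace consists of the restrictions of linear forms $\eta_iy^i$; these are always solutions here (their flat Hessian vanishes), so the entire content is to show they are the only ones. In the Riemannian (flat-cone) case the drift term drops out and a spherical-harmonic decomposition finishes immediately: all harmonics of degree $\ge 2$ acquire a strictly negative coefficient and must vanish.

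The main obstacle is exactly the drift term $I^a\tau_{,a}$: since the angular metric on $I_x$ is in general not round, the naive spherical-harmonic argument is unavailable. To handle it I would exploit that the drift is a gradient, $I^\sharp=\nabla^{\mathbf{h}}\big(\log\sqrt{\det g}\,|_{I_x}\big)$, so that $\mathcal{L}:=\Delta_{\mathbf{h}}+I^a\partial_a$ is self-adjoint with respect to the weighted measure $\sqrt{\det g}\,d\mathrm{vol}_{\mathbf{h}}$, equivalently $g^{ij}\partial_{y^i}\partial_{y^j}$ is self-adjoint for $\det(g)\,dy$ on the cone. Feeding $g^{ij}\mathbf{T}_{y^iy^j}=0$ into the weighted Bochner identity for $\mathbf{T}$, and using that the vertical derivative of $\mathbf{T}_{y^iy^j}$ is totally symmetric (it is a third vertical derivative), I expect to reach an integrated identity of the form $\int_{I_x}\big(|\mathbf{T}_{y^iy^j}|^2+\text{(curvature and Cartan terms)}\big)=0$. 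The crux — and the point at which the Finsler structure equations of \cite{Li} must enter — is to show that these error terms cancel or carry a favorable sign, forcing the trace-free part of the Hessian, and hence $\mathbf{T}_{y^iy^j}$ itself, to vanish. Once $\mathbf{T}$ is linear, both assertions of the theorem follow at once.
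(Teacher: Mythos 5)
Your reduction is correct, and it is essentially the paper's own reduction in coordinate rather than moving-frame language: the identity $E_{ij}=F\,\mathbf{S}_{y^iy^j}$ (your extra factor $\tfrac12$ is a harmless difference of normalization), the observation that constancy of $\mathsf{e}$ along the fibers makes $\mathbf{T}=\mathbf{S}-\tfrac{1}{n-1}\mathsf{e}F$ satisfy $g^{ij}\mathbf{T}_{y^iy^j}=0$, and the rewriting of that condition as the fiberwise equation $\Delta_{\mathbf{h}}\tau+I^a\tau_{,a}+(n-1)\tau=0$ all appear in the paper as \eqref{S E natural frame}, \eqref{hessian S} and \eqref{laplace 1}--\eqref{Schrodinger equ}, where the drift is written through the Cartan-type form $\eta=d^V\tau$, i.e. exactly your gradient field $\nabla^{\mathbf{h}}\log\sqrt{\det g}$; your self-adjointness remark is likewise implicit there. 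The gap is the final step, and it is not a finishing touch: the classification of all solutions of this drift eigenvalue equation as restrictions of linear forms $\xi_iy^i$ is the entire analytic content of the theorem, and you do not prove it --- you only ``expect'' a weighted Bochner identity whose error terms ``cancel or carry a favorable sign.''

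There is a concrete reason this cannot be waved through. The eigenvalue $n-1$ is critical: already on the round sphere it is the first nonzero eigenvalue of $\Delta_{\mathbf{h}}$ and its eigenspace consists precisely of the linear functions, so a Lichnerowicz--Obata-type argument has zero slack, and a single error term of the wrong sign is fatal. Here the error terms are genuinely indefinite: the Gauss equation of the indicatrix as a centro-affine hypersurface gives $\mathrm{Ric}=(n-2)\mathbf{h}+(\text{quadratic Cartan-tensor terms})$, and the Bakry--\'Emery correction attached to your weighted measure contributes covariant derivatives of the mean Cartan torsion; neither has a sign for a general Minkowski norm. The paper closes exactly this step by invoking a global theorem of Blaschke and Schneider (Satz 3.1 of \cite{Schneider1}) on the Legendre-dual indicatrix $S^*_xM$ (the Legendre transformation preserves the centro-affine metric and reverses the sign of the Tchebychev form), which asserts that all solutions are of the form $\langle\xi,d^{T^*_xM}F^*_x\rangle$, pulling back to $\xi_iy^i/F$; Crampin \cite{Cra2} proves the equivalent statement about Minkowski norms directly. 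Unless you either quote such a result or actually carry out the integral-formula argument with the centro-affine curvature identities in place, your proposal stops precisely where the theorem begins.
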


A direct consequence is that vanishing Berwald scalar curvature implies vanishing $\mathbf{E}$-curvature. The following theorem improves the above mentioned result in \cite{Li}.
\begin{theo}
  Let $(M,F)$ be an $n$-dimensional Landsberg manifold. If the Berwald scalar curvature vanishes, then $M$ is a Berwald manifold.
\end{theo}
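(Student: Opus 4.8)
The plan is to reduce this statement to the combination of Theorem~\ref{theo 1} and the earlier result of \cite{Li} that $\mathbf{L}=0$ together with $\mathbf{E}=0$ forces $\mathbf{B}=0$. The whole difficulty has in effect been pushed into Theorem~\ref{theo 1}, which upgrades the scalar hypothesis $\mathsf{e}=0$ to the full tensorial vanishing $\mathbf{E}=0$; what remains is a short two-step deduction.

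First I would observe that the hypothesis is that the Berwald scalar curvature vanishes identically, $\mathsf{e}=\tr\mathbf{E}\equiv 0$ on $SM$. In particular $\mathsf{e}$ is trivially a function on the base $M$, namely the zero function, so the hypothesis of Theorem~\ref{theo 1} is met. Applying Theorem~\ref{theo 1} then yields
$$\mathbf{E}=\frac{1}{n-1}\mathsf{e}\,\mathbf{h}(\cdot,J\cdot)=0,$$
since the scalar factor $\mathsf{e}$ is zero. Thus the mean Berwald curvature vanishes.

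Next I would invoke the Landsberg hypothesis $\mathbf{L}=0$. Having now established both $\mathbf{L}=0$ and $\mathbf{E}=0$, I would apply directly the result of \cite{Li} recalled in the Introduction, which asserts that $\mathbf{L}=0$ and $\mathbf{E}=0$ imply $\mathbf{B}=0$. Hence $\mathbf{B}=0$, and $(M,F)$ is a Berwald manifold, as claimed.

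The only point that requires any care is checking that the global vanishing $\mathsf{e}\equiv 0$ legitimately places us within the hypothesis of Theorem~\ref{theo 1}; this holds because the zero function certainly descends to $M$. After that the two cited results chain together immediately, so the genuine mathematical content is entirely contained in the proof of Theorem~\ref{theo 1} and in the earlier argument of \cite{Li}, and the present statement is merely their juxtaposition.
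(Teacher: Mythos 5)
Your proposal is correct and coincides with the paper's own (implicit) argument: the paper derives this theorem exactly as you do, by noting that $\mathsf{e}\equiv 0$ trivially satisfies the hypothesis of Theorem~\ref{theo 1}, hence $\mathbf{E}=0$, and then invoking the result of \cite{Li} that $\mathbf{L}=0$ together with $\mathbf{E}=0$ implies $\mathbf{B}=0$. Nothing is missing; the mathematical content indeed resides in Theorem~\ref{theo 1} and \cite{Li}, just as you observe.
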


For Finsler manifolds with $(\alpha,\beta)$-metrics, we obtain the following result.
\begin{theo}\label{theo 3}
  Let $M$ be an $n\geq3$ dimensional manifold equipped an $(\alpha,\beta)$-metric $F=\alpha\phi(s)$, where $s=\frac{\beta}{\alpha}$, $\alpha$ is a Riemannian metric, $\beta$ an one form on $M$ and $\phi$ is a smooth function of real variables. If the mean Landsberg curvature $\mathbf{J}=0$ and the Berwald scalar curvature $\mathsf{e}=0$, then $M$ is a Berwald manifold.
\end{theo}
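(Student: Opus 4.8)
The plan is to combine Theorem 2 with a structural analysis of $(\alpha,\beta)$-metrics. Since the hypotheses include $\mathsf{e}=0$, Theorem~\ref{theo 1} immediately gives $\mathbf{E}=0$, and hence Theorem~2 tells us that \emph{if} we can also establish $\mathbf{L}=0$ (not merely $\mathbf{J}=0$), then $M$ is Berwald. So the real content of Theorem~\ref{theo 3} is to promote the mean Landsberg condition $\mathbf{J}=0$ to the full Landsberg condition $\mathbf{L}=0$ under the extra structural constraint $\mathbf{E}=0$ coming from $\mathsf{e}=0$. First I would therefore fix, once and for all, the step $\mathsf{e}=0 \Rightarrow \mathbf{E}=0$ from Theorem~\ref{theo 1}, and reduce the theorem to showing that a non-Riemannian $(\alpha,\beta)$-metric with $\mathbf{J}=0$ and $\mathbf{E}=0$ is Landsbergian (equivalently directly Berwaldian) in dimension $n\geq 3$.

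The natural tool here is the classical formula machinery for $(\alpha,\beta)$-metrics. I would write down the explicit expressions for the Landsberg curvature $\mathbf{L}$ (or its components $L_{ijk}$), the mean Landsberg curvature $\mathbf{J}$ (its trace $J_i = g^{jk}L_{ijk}$), and the $\mathbf{E}$-curvature in terms of $\phi$, $s$, the covariant derivatives of $\beta$ with respect to $\alpha$, and the contraction invariants that are standard in this subject (typically denoted $b^2 = \|\beta\|_\alpha^2$, together with the quantities built from $r_{ij}$ and $s_{ij}$, the symmetric and antisymmetric parts of $\nabla\beta$). For an $(\alpha,\beta)$-metric these curvatures are all expressible as a Riemannian tensor part times scalar coefficient functions of $s$ that are explicit rational combinations of $\phi$ and its derivatives. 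The strategy is to show that the coefficient ODEs/algebraic relations forced by $\mathbf{J}=0$ and $\mathbf{E}=0$ simultaneously, when $n\geq 3$, can only be satisfied if either $\phi$ makes $F$ Riemannian (in which case $F$ is trivially Berwald) or the defining scalar for $\mathbf{L}$ itself vanishes.

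The key mechanism I expect to exploit is that $\mathbf{J}=0$ and $\mathbf{E}=0$ are two scalar-coefficient conditions that both feed off the same underlying non-Riemannian data, but with \emph{different} coefficient functions of $s$. Concretely, both $\mathbf{J}$ and $\mathbf{E}$ decompose into a term proportional to the ``isotropic'' mean Cartan-type contraction and a term carrying the full directional dependence; setting them both to zero gives an overdetermined pair of relations among the $s$-dependent coefficients. In dimension $n\geq 3$ there is enough room in the angular part (the factor $n-1$ and the behaviour of $\mathbf{h}$) that these relations decouple the ``mean'' part from the remaining trace-free part, forcing the trace-free Landsberg coefficient to vanish as well. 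This is where the dimension restriction $n\geq 3$ enters essentially, since in dimension $2$ the angular metric is one-dimensional and $\mathbf{J}=0$ already implies $\mathbf{L}=0$ trivially, so the statement would be vacuous or different there.

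The main obstacle, and the step I would spend the most care on, is the algebraic/ODE elimination: showing that the common vanishing of the $s$-coefficient functions associated to $\mathbf{J}$ and $\mathbf{E}$ forces the $s$-coefficient function associated to $\mathbf{L}$ to vanish, rather than merely constraining $\phi$ to a larger family. This requires keeping careful track of which rational functions of $\phi,\phi',\phi''$ appear, using the regularity/positivity conditions on $\phi$ (so that $F$ is a genuine Finsler metric and denominators do not vanish), and ruling out the degenerate branch where the Riemannian metric and one-form conspire (e.g. $\beta$ parallel with respect to $\alpha$, which itself already yields a Berwald metric). Once $\mathbf{L}=0$ is in hand, Theorem~2 applies verbatim with $\mathbf{E}=0$ to conclude $\mathbf{B}=0$, i.e.\ that $M$ is a Berwald manifold, completing the proof.
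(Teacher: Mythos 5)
Your opening move coincides with the paper's: Theorem \ref{theo 1} converts $\mathsf{e}=0$ into $\mathbf{E}=0$, and the problem becomes exploiting $\mathbf{J}=0$ together with $\mathbf{E}=0$ for an $(\alpha,\beta)$-metric. From there, however, your proposal has a genuine gap: the step you defer to ``the most care'' --- showing that the vanishing of these two quantities forces the full Landsberg (equivalently Berwald) curvature to vanish --- is the entire mathematical content of the theorem, and you only announce it rather than carry it out. It is not a routine coefficient elimination: it is essentially the generalized unicorn problem for $(\alpha,\beta)$-metrics, whose resolution occupies separate papers. Moreover, the mechanism you propose is not the one that can work. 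Both hypotheses are trace conditions ($\mathbf{J}=\tr\mathbf{L}$, $\mathbf{E}=F\cdot\tr\mathbf{B}$), and a pointwise algebraic ``decoupling of the mean part from the trace-free part via the angular metric'' cannot recover the trace-free part of $\mathbf{L}$ from its trace; the $s$-dependent coefficient functions in the $(\alpha,\beta)$ formulas multiply independent tensorial data built from $r_{ij}$ and $s_{ij}$ (and their contractions $r_i$, $s_i$, $r_{00}$, $s_0$, \ldots), not scalar multiples of a single Cartan-type contraction, so the two conditions do not reduce to an overdetermined system of ODEs in $\phi$ alone. Your explanation of the role of $n\geq 3$ is also off target: in dimension $2$ the statement is neither vacuous nor essentially different (there $\mathbf{J}=0$ is equivalent to $\mathbf{L}=0$, so the conclusion follows at once from Theorem 2); the dimension hypothesis is needed for a quite specific structural result about $\beta$, as follows.

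The paper replaces your elimination step by two citations. First, by the result of \cite{ChenHePan}, for $(\alpha,\beta)$-metrics on a manifold of dimension $n\geq 3$, the condition $\mathbf{E}=0$ forces $\|\beta\|_{\alpha}$ to be constant --- this is exactly where $n\geq 3$ enters. Second, by \cite{ZouCheng14}, an $(\alpha,\beta)$-metric with $\mathbf{J}=0$ and $\|\beta\|_{\alpha}$ constant must be Berwald; that theorem itself rests on a delicate ODE analysis of $\phi$, with the polynomial case of $\phi$ handled separately (which is why the paper remarks that one may assume $\phi$ is not a polynomial). Note that on this route one never needs to first establish $\mathbf{L}=0$ and then invoke Theorem 2: the conclusion $\mathbf{B}=0$ comes directly from \cite{ZouCheng14}. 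To make your proposal into a proof you would in effect have to reprove both cited results, which is far beyond the sketch given; as it stands, the critical step is missing and the heuristic offered for it would fail.
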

By the result in \cite{ZouCheng14}, we only need to prove Theorem \ref{theo 3} for the case $\phi$ not a polynomial. By Theorem \ref{theo 1}, $M$ has zero $\mathbf{E}$-curvature. For $(\alpha,\beta)$-metrics on a manifold of dimension $n\geq3$, by a result in \cite{ChenHePan}, vanishing  $\mathbf{E}$-curvature implies $\beta$ has constant length with respect to the Riemannian metric $\alpha$. In \cite{ZouCheng14}, it is proved that $(\alpha,\beta)$ metrics with $\mathbf{J}=0$ and $\|\beta\|_{\alpha}$ constant must be Berwald metrics. Therefore Theorem \ref{theo 3} is proved.

\medskip

This paper contains two parts. In Sect. 1, we introduce some basic results of the Chern connection and the Berwald connection in Finsler geometry. Then we prove the main theorem in Sect. 2.

\medskip

In this paper we adopt the index range
$$1\leq i,j,k,\ldots\leq n, \quad 1\leq \alpha,\beta,\gamma,\ldots\leq n-1.$$

\medskip

{\bf Acknowledgements.}  The first author
would like to thanks Professor Huitao Feng for his consistent support and encouragement.

\section{Preliminary results }

In this section, we would like to review some facts in Finsler geometry which will be used later.

Let $(M,F)$ be an $n$-dimensional Finsler manifold. Let $SM=\{F(x,y)=1\}$ be the unit sphere bundle (or indicatrix bundle) with the natural projection $\pi:SM\to M$.
Let $\{\mathbf{e}_1,\ldots,\mathbf{e}_n,\mathbf{e}_{n+1},\ldots,\mathbf{e}_{2n-1}\}$ be a local adapted orthonormal frame with respect to the Sasaki-type Riemannian metric $g^{T(SM)}$ on $T(SM)$, where $\mathbf{e}_n=\xi$ is the Reeb vector filed (c.f. \cite{Mo}). Let $\theta=\left\{\omega^1,\ldots,\omega^n,\omega^{n+1},\ldots,\omega^{2n-1}\right\}$ be the dual frame, then $\omega^n=F_{y^i}dx^i$ is the Hilbert form. $\omega^n$ defines a contact structure on $SM$. Let $\mathfrak{l}$ be the trivial bundle generated by $\xi$. Let $\mathcal{D}$ be the contact distribution $\{\omega^n=0\}$. We define an almost complex structure on $\mathcal{D}$ by $$J=-\omega^\alpha\otimes\mathbf{e}_{n+\alpha}+\omega^{n+\alpha}\otimes\mathbf{e}_\alpha.$$ Let $\mathcal{F}:=V(SM)$ be the integrable distribution given by the tangent spaces of the fibers of $SM$.  Let $p:T(SM)\to \mathcal{F} $ be the natural projection. Then the tangent bundle $T(SM)$ admits a splitting
\begin{equation}\label{splitting}
  T(SM)=\mathfrak{l}\oplus J\mathcal{F}\oplus \mathcal{F}=:H(SM)\oplus \mathcal{F}.
\end{equation}
The horizontal subbundle $H(SM)=\mathfrak{l}\oplus J\mathcal{F}$ is spanned by $\{\mathbf{e}_1,\ldots,\mathbf{e}_n\}$ on which the Chern connection is defined. And $\{\mathbf{e}_{n+1},\ldots,\mathbf{e}_{2n-1}\}$ gives a  local frame of $\mathcal{F}$.

Let $\nabla^{\rm Ch}:\Gamma(H(SM))\to\Omega^1(SM;H(SM))$ be the Chern connection, which can be extended to a map
\begin{align*}
\nabla^{\rm Ch}:\Omega^*(SM;H(SM))\rightarrow \Omega^{*+1}(SM;H(SM)),
\end{align*}
where $\Omega^*(SM;H(SM)):=\Gamma(\Lambda^*(T^*SM)\otimes H(SM))$ denotes the horizontal valued differential forms on $SM$ (refers to pp.3-5 of \cite{Zhang} for details).
It is well known that the symmetrization of Chern connection  $\hat{\nabla}^{\rm Ch}$ is the Cartan connection.
The difference between $\hat{\nabla}^{\rm Ch}$ and $\nabla^{\rm Ch}$ will be referred as the Cartan endomorphism,
\begin{align*}
H= \hat{\nabla}^{\rm Ch}-\nabla^{\rm Ch}\quad\in\Omega^1(SM,{\rm
End}(H(SM))).
\end{align*}
Set $H=H_{ij}\omega^j\otimes\mathbf{e}_i$. By Lemma 3 and Lemma 4 in \cite{FL}, $H_{ij}=H_{ji}=H_{ij\gamma}\omega^{n+\gamma}$ has the following form under natural coordinate systems
\begin{align*}
H_{ij\gamma}=-A_{pqk}u_i^pu_j^qu_{\gamma}^k,
\end{align*}
where $A_{ijk}=\frac{1}{4}F[F^2]_{y^iy^jy^k}$ and $u_i^j$ are the transformation matrix from adapted orthonormal frames to natural frames.

Set $\eta={\rm tr}[H]~\in\Omega^1(SM).$
It is referred as the Cartan-type form in \cite{FL}. The Cartan-type form has the following local formula
\begin{align*}
\eta=\sum_{i=1}^nH_{ii\gamma}\omega^{n+\gamma}=:H_{\gamma}\omega^{n+\gamma}.
\end{align*}
\begin{remark}
In literature, Cartan from $I$ is defined as $I=H_{\gamma}\omega^\gamma$. It is easy to check $I=J^*\eta$, where $J^*$ denote the dual of the almost complex structure $J$ mentioned above. This is the reason why we call $\eta$ the Cartan-type form. The Cartan-type form $\eta$ is crucial for our study.
\end{remark}

Let $\bm{\omega}=(\omega_j^i)$ be the connection matrix of the Chern
connection with respect to the local adapted orthonormal frame field,
i.e.,
\begin{align*}
\D{\rm Ch}\mathbf{e}_i=\omega_i^j\otimes\mathbf{e}_j.
\end{align*}
\begin{lem}[\cite{BaoChernShen,ChernShen,Mo,SS}]\label{sturcture eq}
The connection matrix $\bm{\omega}=(\omega_j^i)$ of $\nabla^{\rm Ch}$ is determined by the following structure equations,
\begin{equation}\left\{
\begin{aligned}
&d\vartheta=-\bm{\omega}\wedge\vartheta,\\
&\bm{\omega}+\bm{\omega}^t=-2H,
\end{aligned}\right.\label{Chern connection structure eq. matrix}
\end{equation}
where $\vartheta=(\omega^1,\ldots,\omega^{n})^t$. Furthermore,
$$\omega_{\alpha}^{n}=-\omega^{\alpha}_{n}=\omega^{n+\alpha},\quad{\rm and}\quad \omega^{n}_{n}=0.$$
\end{lem}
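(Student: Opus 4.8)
The plan is to read off the two structure equations as the two defining axioms of the Chern connection—torsion-freeness and the fact that its symmetrization is the metric Cartan connection—expressed in the adapted orthonormal coframe, and then to extract the supplementary relations from the $0$-homogeneity of the fundamental tensor together with the contact geometry of $SM$. For the first equation, torsion-freeness of $\nabla^{\rm Ch}$ is precisely the vanishing of the horizontal-valued torsion $2$-form, which in the coframe $\vartheta=(\omega^1,\dots,\omega^n)^t$ reads $d\vartheta=-\bm{\omega}\wedge\vartheta$; here I would use that, unlike the full exterior derivative on $SM$, the torsion picks out exactly the $\bm{\omega}\wedge\vartheta$ part.

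For the second equation I would invoke the relation $\hat{\nabla}^{\rm Ch}=\nabla^{\rm Ch}+H$ already recorded in the excerpt. The Cartan connection $\hat{\nabla}^{\rm Ch}$ is metric-compatible, so in the orthonormal frame its connection matrix is skew-symmetric, $(\bm{\omega}+H)+(\bm{\omega}+H)^t=0$; since $H=H^t$ this collapses immediately to $\bm{\omega}+\bm{\omega}^t=-2H$. This is the cleanest route given the setup, as it reuses the decomposition of the Cartan endomorphism rather than recomputing non-metricity from scratch.

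For the supplementary relations I would exploit $A_{ijk}y^k=0$, which is Euler's identity applied to the $0$-homogeneous tensor $[F^2]_{y^iy^j}$. Because $\mathbf{e}_n=\xi$ is the radial direction, the column $u_n^k$ of the transition matrix is proportional to $y^k$, so the formula $H_{ij\gamma}=-A_{pqk}u_i^pu_j^qu_\gamma^k$ forces $H_{nn}=0$ and $H_{\alpha n}=0$. The diagonal entry of $\bm{\omega}+\bm{\omega}^t=-2H$ then gives $2\omega_n^n=-2H_{nn}=0$, and the mixed entries give $\omega_\alpha^n+\omega_n^\alpha=-2H_{\alpha n}=0$, i.e. $\omega_n^\alpha=-\omega_\alpha^n$.

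It remains to identify these forms with the vertical coframe. Substituting $\omega_n^n=0$ into the first structure equation applied to the Hilbert form yields $d\omega^n=-\omega_\alpha^n\wedge\omega^\alpha$, and comparing with the canonical expression $d\omega^n=\omega^\alpha\wedge\omega^{n+\alpha}$ for the differential of the Hilbert form on $SM$ (the normalization built into the Sasaki structure) identifies $\omega_\alpha^n=\omega^{n+\alpha}$, hence $\omega_n^\alpha=-\omega^{n+\alpha}$. The main obstacle I expect is precisely this last identification: it hinges on the Sasaki-metric normalization of $\omega^{n+\alpha}$ and on knowing that $d\omega^n$ carries no $\omega^\alpha\wedge\omega^\beta$ or $\omega^{n+\alpha}\wedge\omega^{n+\beta}$ components, which is exactly where the adapted-frame conventions must be reconciled with the classical Chern structure equations. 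Since the lemma is quoted from standard sources, the residual work is conventional bookkeeping rather than a genuinely new argument.
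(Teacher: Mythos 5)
The paper offers no proof of this lemma at all---it is quoted from \cite{BaoChernShen,ChernShen,Mo,SS}---so your attempt can only be measured against the standard textbook argument. Most of your sketch matches it and is correct: the first equation is indeed torsion-freeness of $\nabla^{\rm Ch}$ in the adapted coframe; the second follows from skew-symmetry of the (metric-compatible) Cartan connection matrix in an orthonormal frame together with $H=H^t$; and the relations $H_{nn}=0$, $H_{\alpha n}=0$ do follow from Euler's identity $A_{ijk}y^k=0$ combined with $u_n^k\propto y^k$, whence $\omega_n^n=-H_{nn}=0$ and $\omega_\alpha^n+\omega_n^\alpha=-2H_{\alpha n}=0$.

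The genuine gap is your final identification $\omega_\alpha^n=\omega^{n+\alpha}$, and it is not ``conventional bookkeeping.'' Comparing $d\omega^n=\omega^\alpha\wedge\omega_\alpha^n$ (which uses $\omega_n^n=0$) with $d\omega^n=\omega^\alpha\wedge\omega^{n+\alpha}$ gives $\omega^\alpha\wedge\bigl(\omega_\alpha^n-\omega^{n+\alpha}\bigr)=0$, and by Cartan's lemma this only yields $\omega_\alpha^n=\omega^{n+\alpha}+c_{\alpha\beta}\omega^\beta$ with $c_{\alpha\beta}=c_{\beta\alpha}$. This ambiguity is structurally invisible to the comparison you propose: a symmetric correction satisfies $c_{\alpha\beta}\,\omega^\alpha\wedge\omega^\beta=0$ identically, so both candidates produce exactly the same $d\omega^n$, and no knowledge about which components $d\omega^n$ does or does not carry can ever rule $c_{\alpha\beta}$ out---your diagnosis of the obstacle is therefore aimed at the wrong object. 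What kills $c_{\alpha\beta}$ is information about $\omega_\alpha^n$ itself, namely that the horizontal bundle $H(SM)=\mathfrak{l}\oplus J\mathcal{F}$ is the splitting induced by the spray (equivalently $N^i_j=\Gamma^i_{jk}y^k$ in natural coordinates), which forces $\nabla^{\rm Ch}\mathbf{e}_n$, i.e.\ the forms $\omega_n^i$, to be purely vertical. Granting this, $\omega_\alpha^n=-\omega_n^\alpha$ has no $\omega^\beta$-component, so $c_{\alpha\beta}\omega^\beta$ is simultaneously horizontal and vertical and vanishes, completing the proof; alternatively one verifies the relation directly from $\nabla^{\rm Ch}\mathbf{e}_n=\frac{1}{F}\,\delta y^i\otimes\partial_{x^i}$, which is how the cited references proceed. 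Note also that your appeal to $d\omega^n=\omega^\alpha\wedge\omega^{n+\alpha}$ as a ``normalization built into the Sasaki structure'' risks circularity: the standard proof of that identity already uses the compatibility of the nonlinear connection with the spray, i.e.\ precisely the ingredient missing from your argument.
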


\begin{remark}
In \cite{FL}, we proved that the Chern connection is just the Bott connection on $H(SM)$ in the theory of foliation (c.f. \cite{Zhang}).
\end{remark}

Let $R^{\rm Ch}=\left(\nabla^{\rm Ch}\right)^2$  be the curvature of $\nabla^{\rm Ch}$.
Let $\Omega=\left(\Omega_j^i\right)$ be the curvature forms of $R^{\rm Ch}$. From the torsion freeness, the curvature form has no pure vertical differential form
\begin{align*}
\Omega_j^i=\frac{1}{2}R_{j~kl}^{~i}\omega^k\wedge\omega^l+P_{j~k\gamma}^{~i}\omega^k\wedge\omega^{n+\gamma}.
\end{align*}
The Landsberg curvature is defined as
$$\mathbf{L}:=L^{~i}_{j~\gamma}\omega^{n+\gamma}\otimes\omega^j\otimes\mathbf{e}_i=-P_{j~n\gamma}^{~i}\omega^{n+\gamma}\otimes\omega^j\otimes\mathbf{e}_i,$$
the mean Landsberg curvature is defined by
$$\mathbf{J}={\rm tr}\mathbf{L}=J_{\gamma}\omega^{n+\gamma}=-P_{i~n\gamma}^{~i}\omega^{n+\gamma}.$$
If a Finsler manifold satisfies $P=0$, $\mathbf{L}=0$ or $\mathbf{J}=0$, then it is called a Berwald, Landsberg or weak Landsberg manifold, respectively.

As we will study the geometry of the fibers of $SM$, in the following we mention some facts of the geometry of the unit sphere bundle which is built around the theory of the Chern connection.

Let $\pi:SM\to M$ be the unit sphere bundle. The tensor
\begin{align*}
g^{T(SM)}=\sum_{i=1}^{n}\omega^i\otimes\omega^i+\sum_{\alpha=1}^{n-1}\omega^{n+\alpha}\otimes\omega^{n+\alpha}=:g+\dot{g}
\end{align*}
gives raise a Sasaki-type Riemannian metric on $SM$. Using the adapted frame and the Chern connection, the connection matrix of the Levi-Civita connection $\nabla^{T(SM)}$ of $g^{T(SM)}$ is given by (cf. \cite{Mo} )
\begin{equation*}
\Theta=\left[ \begin{array}{cc}
\omega_j^i+\left(H_{ij\gamma}+\frac{1}{2}R_{n~ij}^{~\gamma}\right)\omega^{n+\gamma}&-\left(\frac{1}{2}R_{n~ij}^{~\alpha}+H_{ij\alpha}\right)\omega^{j}-P_{n~i\gamma}^{~\alpha}\omega^{n+\gamma}\\
\left(H_{ij\alpha}+\frac{1}{2}R_{n~ij}^{~\alpha}\right)\omega^{j}+P_{n~i\beta}^{~\alpha}\omega^{n+\beta}&\omega_{\beta}^\alpha+H_{\alpha\beta\gamma}\omega^{n+\gamma}
\end{array}\right].
\end{equation*}

As the restriction the Levi-Civita connection $\nabla^{T(SM)}$ on $\mathcal{F}$, $\nabla^{\mathcal{F}}:=p\nabla^{T(SM)}p$ is the Euclidean connection of the bundle $(\mathcal{F},\dot{g})$. It is clear that along each fiber of $S_xM$, $x\in M$, $\nabla^{\mathcal{F}}$ is just the Levi-Civita connection of the Riemannian manifold $(S_xM, \dot{g}_x)$.

For any $x\in M$, the fiber of $S_xM$ is the indicatrix of the Minkowski space $(T_xM, F_x)$, where $F_x:=F|_{T_xM}$ for simplicity. By the discussion in our previous paper \cite{Li}, $\dot{g}_x$ is just the centro-affine metric of $S_xM$, when it is considered as an affine hypersurface in $T_xM$. We define a connection on $\mathcal{F}$ by $\bar{\nabla}^{\rm Ch}:=\nabla^{\mathcal{F}}+J\circ H\circ J$. Therefore,  the connection forms of $\bar{\nabla}^{\rm Ch}$ are precisely given by
\begin{equation}\label{vertical ch connection}
  \bar{\nabla}^{\rm Ch}\mathbf{e}_{n+\beta}=\omega_{\beta}^{\alpha}\otimes\mathbf{e}_{n+\alpha}.
\end{equation}
In fact $\bar{\nabla}^{\rm Ch}$ gives the affine connection along the fibers of $SM$.

We introduces some symbols to denote different covariant differentials for conveniens . For example, let $T=T^i_j\mathbf{e}_i\otimes\omega^j$ be an arbitrary smooth local section of the bundle $H(SM)\otimes H^*(SM)$ over $SM$. Then
\begin{equation*}
  \nabla^{\rm Ch}T=\left(dT^i_j+T^k_j\omega^i_k-T^i_k\omega^k_j\right)\otimes\mathbf{e}_i\otimes\omega^j
\end{equation*}
If we expand the coefficients as one forms on $SM$ in terms of the adapted coframe, then we denote
\begin{equation*}
(\nabla^{\rm Ch}T)^i_j:=dT^i_j+T^k_j\omega^i_k-T^i_k\omega^k_j=:T^i_{j|\alpha}\omega^\alpha+T^i_{j|n}\omega^n+T^i_{j;\gamma}\omega^{n+\gamma}.
\end{equation*}
Therefore we obtain
\begin{equation*}
 T^i_{j|\alpha}=i_{\mathbf{e}_{\alpha}}(\nabla^{\rm Ch}T)^i_j,\quad
 T^i_{j|n}=i_{\mathbf{e}_{n}}(\nabla^{\rm Ch}T)^i_j, \quad
 T^i_{j;\gamma}=i_{\mathbf{e}_{n+\gamma}}(\nabla^{\rm Ch}T)^i_j
\end{equation*}
where $i_{v}$ is the notation for the interior multiplication on differential forms by any vector $v$.

According to the splitting (\ref{splitting}) and using the almost complex structure $J$, we obtain a section of $\mathcal{F}\otimes\mathcal{F}^*$ from $T$ as following
\begin{equation*}
  \bar{T}=T^{\alpha}_{\beta}\mathbf{e}_{n+\alpha}\otimes\omega^{n+\beta}.
\end{equation*}
By (\ref{vertical ch connection}), the covariant differential of $\bar{T}$ by using $\bar{\nabla}^{\rm Ch}$ is given by
\begin{equation*}
  \bar{\nabla}^{\rm Ch}\bar{T}=\left(dT^{\alpha}_{\beta}+T^{\mu}_{\beta}\omega^{\alpha}_{\mu}-T^{\alpha}_{\mu}\omega^{\mu}_{\beta}\right)\otimes\mathbf{e}_{n+\alpha}\otimes\omega^{n+\beta}
\end{equation*}
Similarly, we denote that
\begin{equation*}
  (\bar{\nabla}^{\rm Ch}\bar{T})^{\alpha}_{\beta}:=dT^{\alpha}_{\beta}+T^{\mu}_{\beta}\omega^{\alpha}_{\mu}-T^{\alpha}_{\mu}\omega^{\mu}_{\beta}=T^{\alpha}_{\beta\parallel\gamma}\omega^{\gamma}+T^{\alpha}_{\beta\parallel n}\omega^{n}+T^{\alpha}_{\beta,\gamma}\omega^{n+\gamma}.
\end{equation*}
Therefore, by Lemma \ref{sturcture eq}, one has the following relations
\begin{equation}\label{covar ch bar ch}
  \begin{split}
    T^{\alpha}_{\beta\parallel i}&=i_{\mathbf{e}_{i}}(dT^{\alpha}_{\beta}+T^{\mu}_{\beta}\omega^{\alpha}_{\mu}-T^{\alpha}_{\mu}\omega^{\mu}_{\beta})=i_{\mathbf{e}_{i}}(dT^{\alpha}_{\beta}+T^{k}_{\beta}\omega^{\alpha}_{k}-T^{\alpha}_{k}\omega^{k}_{\beta})= T^{\alpha}_{\beta| i}\\
    T^{\alpha}_{\beta,\gamma}&=i_{\mathbf{e}_{n+\gamma}}(dT^{\alpha}_{\beta}+T^{\mu}_{\beta}\omega^{\alpha}_{\mu}-T^{\alpha}_{\mu}\omega^{\mu}_{\beta})\\
    &=i_{\mathbf{e}_{n+\gamma}}(dT^{\alpha}_{\beta}+T^{k}_{\beta}\omega^{\alpha}_{k}-T^{\alpha}_{k}\omega^{k}_{\beta})-i_{\mathbf{e}_{n+\gamma}}T^n_{\beta}\omega_n^{\alpha}+i_{\mathbf{e}_{n+\gamma}}T^{\alpha}_n\omega^n_{\beta}\\
    &= T^{\alpha}_{\beta;\gamma}+T^n_{\beta}\delta^\alpha_\gamma-T_n^\alpha\delta_{\beta\gamma}.
  \end{split}
\end{equation}
It is obvious that if $T$ is a section $J\mathcal{F}\otimes (J\mathcal{F})^*$, then $T^{\alpha}_{\beta;\gamma}=T^{\alpha}_{\beta,\gamma}$. These relations will be used in the following study.

Another important linear connection in Finsler geometry is the Berwald connection. We will present some relations between the curvatures of the Chern connection and the Berwald curvatures.
The Berwald connection is defined by $\nabla^{\rm B}=\nabla^{\rm Ch}+J^*\mathbf{L}$, where $J^*$ is dual of the almost complex structure $J$. Let $\tilde{\bm{\omega}}=(\tilde{\omega}_j^i)$ denote the Berwald connection form, then
\begin{align*}
  \tilde{\omega}_j^i=\omega_j^i-L^i_{j\alpha}\omega^\alpha.
\end{align*}

From the torsion freeness the Chern connection and the following well known formula (c.f. \cite{BaoChernShen,ChernShen,Mo,SS})
\begin{align*}
P_{n~k\gamma}^{~i}=-H_{ki\gamma|n}=-L_{ik\gamma},
\end{align*}
the Berwald connection is torsion free,
\begin{equation*}
  d\omega^i=\omega^j\wedge\tilde{\omega}_j^i.
\end{equation*}
Let  $\tilde{\Omega}_j^i$ be the curvature forms of the Berwald connection.
By the torsion freeness of the Berwald connection,  we have
\begin{align*}
\tilde{\Omega}_j^i=\frac{1}{2}\tilde{R}_{j~kl}^{~i}\omega^k\wedge\omega^l+\tilde{P}_{j~k\gamma}^{~i}\omega^k\wedge\omega^{n+\gamma},
\end{align*}
where $\tilde{R}_{j~kl}^{~i}=-\tilde{R}_{j~lk}^{~i}.$ By using Lemma \ref{sturcture eq}, we have the formulae of the curvatures of the Berwald connection,
\begin{align}\label{curvature Berwald by Chern}
\begin{split}
  &\tilde{R}^{~\alpha}_{\beta~\gamma\mu}=R^{~\alpha}_{\beta~\gamma\mu}-(L^\nu_{\beta\gamma}L^\alpha_{\nu\mu}-L^\nu_{\beta\mu}L^\alpha_{\nu\gamma})+(L^\alpha_{\beta\gamma|\mu}-L^\alpha_{\beta\mu|\gamma}),\\
  &\tilde{R}^{~\alpha}_{\beta~\gamma n}= R^{~\alpha}_{\beta~\gamma n},\quad \tilde{R}^{~\alpha}_{n~kl}=-\tilde{R}^{~n}_{\alpha~kl}=R^{~\alpha}_{n~kl},\\
  &\tilde{P}^{~\alpha}_{\beta~\gamma\mu}=P^{~\alpha}_{\beta~\gamma\mu}+L^\alpha_{\beta\gamma;\mu},\quad \tilde{P}^{~n}_{\alpha~\gamma\mu}=2L_{\alpha\gamma\mu},\\
  &\tilde{P}^{~\alpha}_{\beta~n\mu}=0,\quad \tilde{P}^{~\alpha}_{n~k\gamma}=0,\quad \tilde{P}^{~n}_{\alpha~n\gamma}=0.
  \end{split}
\end{align}

Using the explicit formulae of the connections and the curvature tensors under natural coordinate systems (cf. \cite{BaoChernShen}. pp. 27-67), one finds that
\begin{equation*}
  \tilde{P}_{j~kl}^{~i}=F\frac{\partial^3G^i}{\partial y^j\partial y^k\partial y^l}=:FB^i_{jkl}
\end{equation*}
is the usual Berwald curvature $\mathbf{B}$. It is clear that the curvature $P=0$ of the Chern connection if and only if $\mathbf{B}=0$. The mean Berwald curvature or the $\mathbf{E}$-curvature is defined by
\begin{equation*}
  \mathbf{E}=E_{\gamma\mu}\omega^\gamma\otimes\omega^{n+\mu}=\tr \tilde{P}.
\end{equation*}
Under the local adapted frame, the coefficients of the $\mathbf{E}$-curvature is represented as
\begin{equation}\label{coefficients E}
  E_{\gamma\mu}=\tilde{P}^{~i}_{i~\gamma\mu}=\tilde{P}^{~\alpha}_{\alpha~\gamma\mu}=P^{~\alpha}_{\alpha~\gamma\mu}+L^\alpha_{\alpha\gamma;\mu}=P^{~\alpha}_{\alpha~\gamma\mu}+J_{\gamma;\mu}=P^{~\alpha}_{\alpha~\gamma\mu}+J_{\gamma,\mu}.
\end{equation}

\section{A vertical elliptic equation of the S-curvature}


On a local coordinate chart $(U;x^i)$, let $dV_M=\sigma(x)dx^1\wedge\cdots\wedge dx^n$
be any volume form on $M$.  The following important function
on $SM$ is well defined,
$$\tau=\ln\frac{\sqrt{\det{g_{ij}}}}{\sigma(x)}.$$
$\tau$ is called the distortion of $(M,F)$. $\tau$ is a very
important invariant of the Finsler manifold, which is first
introduced by Zhongmin Shen. One refers to \cite{ChernShen,SS}
for more discussion about the distortion $\tau$. The restriction of $\tau$ on each fiber of $SM$ is also critical for the investigation of the centro-affine differential geometry of the fiber. One consults \cite{SSV} for more details.

By definition, the Cartan-type form $\eta$ is the vertical differential of the distortion.
\begin{lem}\label{lem deta}
The Cartan-type one form is the vertical differential of the distortion $\tau$.
$$d^{V}\tau=\eta,$$
where $d^{V}:=\omega^{n+\alpha}\wedge\D{T^*(SM)}_{\mathbf{e}_{n+\alpha}}$, $\D{T^*(SM)}$ is the dual connection of the Levi-Civita connection $\D{T(SM)}$.
\end{lem}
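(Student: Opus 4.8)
The plan is to verify the identity $d^V\tau = \eta$ by computing the vertical exterior differential of the distortion directly and matching it against the local formula $\eta = H_\gamma\,\omega^{n+\gamma}$ established for the Cartan-type form. Recall $\tau = \ln\sqrt{\det g_{ij}} - \ln\sigma(x)$, and since $\sigma(x)$ is a function pulled back from $M$, it is annihilated by any purely vertical differential operator. Thus the entire contribution comes from $\ln\sqrt{\det g_{ij}}$, and I expect the computation to reduce to showing that the vertical derivative of $\frac{1}{2}\ln\det g_{ij}$ in the direction $\mathbf{e}_{n+\gamma}$ reproduces exactly the trace $H_\gamma = \sum_i H_{ii\gamma}$ of the Cartan endomorphism.

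First I would unwind the definition $d^V := \omega^{n+\alpha}\wedge \nabla^{T^*(SM)}_{\mathbf{e}_{n+\alpha}}$, so that applied to a scalar $\tau$ it gives $d^V\tau = (\mathbf{e}_{n+\alpha}\tau)\,\omega^{n+\alpha}$, since on functions the connection term drops and only the directional derivative survives. Hence the claim is equivalent to the coefficient identity $\mathbf{e}_{n+\alpha}\tau = H_\alpha$ for each $\alpha$. To evaluate $\mathbf{e}_{n+\alpha}\tau$ I would pass to natural coordinates, using Jacobi's formula $\delta \ln\det g = g^{ij}\,\delta g_{ij}$, so that a vertical derivative of $\tau$ involves $\tfrac{1}{2}g^{ij}\,(\mathbf{e}_{n+\alpha} g_{ij})$. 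The vertical (fiberwise) derivative of the fundamental tensor $g_{ij}$ is governed precisely by the Cartan tensor $A_{ijk} = \tfrac{1}{4}F[F^2]_{y^iy^jy^k}$, which is the half-derivative of $g_{ij}$ in the $y$-directions; this is exactly the object appearing in the excerpt's formula $H_{ij\gamma} = -A_{pqk}u_i^p u_j^q u_\gamma^k$. The remaining work is a bookkeeping exercise in the transformation matrices $u_i^j$ relating the adapted orthonormal frame to the natural frame: contracting $g^{ij}(\mathbf{e}_{n+\alpha}g_{ij})$ and expressing it through $A_{pqk}$ should yield $\sum_i H_{ii\alpha} = H_\alpha$, where the orthonormality ensures $g^{ij}$ contracts the frame matrices into the Kronecker trace over the first two indices of the Cartan endomorphism.

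The main obstacle I anticipate is the careful handling of the vertical directional derivative when acting on $\det g_{ij}$, because $\mathbf{e}_{n+\alpha}$ is a unit-length frame vector tangent to the indicatrix $SM = \{F=1\}$, not a coordinate vector $\partial/\partial y^i$; the vertical vectors must be tangent to the fiber, which imposes the constraint $F_{y^i}\,(\text{vertical component}) = 0$. I would therefore need to confirm that the trace $g^{ij}A_{ijk}y^k = 0$ vanishes (a standard consequence of the homogeneity of $F$, equivalently $A_{ijk}y^k = 0$), so that differentiating along the constrained fiber directions $\mathbf{e}_{n+\alpha}$ rather than the full $y$-derivatives does not introduce spurious terms. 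Once this homogeneity identity is invoked, the constrained vertical derivative agrees with the naive one on the relevant contraction, and the identity $d^V\tau = \eta$ follows. A cleaner alternative, avoiding explicit coordinates, would be to recognize $\tau$ as the fiberwise logarithmic density of the centro-affine metric $\dot g_x$ on $S_xM$ and to use the fact — implicit in the paper's identification of the Chern connection restricted to $\mathcal{F}$ with the affine connection along the fibers — that its vertical gradient is the trace of the second fundamental (Cartan) endomorphism; I would use this geometric route to cross-check the coordinate computation.
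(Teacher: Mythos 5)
The paper never proves this lemma: it is stated bare, as a known fact (it is the classical identity that the vertical derivative of the distortion is the mean Cartan form, cf.\ the references \cite{ChernShen,SS} that the paper cites for the distortion), and the paper immediately moves on to the proposition quoted from \cite{Li}. So there is no internal argument to compare yours against, and your proposal must stand on its own. On its own it is essentially right, and it is the standard verification: $d^V$ applied to a scalar reduces to $(\mathbf{e}_{n+\alpha}\tau)\,\omega^{n+\alpha}$; the volume factor $\sigma(x)$ is killed by vertical differentiation; Jacobi's formula gives $\mathbf{e}_{n+\alpha}\tau=\tfrac12 g^{ij}\,\mathbf{e}_{n+\alpha}(g_{ij})$; the identity $\partial g_{ij}/\partial y^k=\tfrac{2}{F}A_{ijk}$ brings in the Cartan tensor; and orthonormality of the horizontal frame, $g^{pq}=\sum_i u_i^p u_i^q$, turns the contraction into the trace $H_\gamma$. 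Your point about the constrained fiber directions is also correctly dispatched: $g_{ij}$ is $0$-homogeneous in $y$ (equivalently $A_{ijk}y^k=0$), so tangency to the indicatrix costs nothing.

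The one step you cannot leave as ``bookkeeping'' is the sign, because that is precisely where a naive run of your computation fails. If you take $\mathbf{e}_{n+\gamma}=+\,u_\gamma^k F\,\partial/\partial y^k$, you get $\mathbf{e}_{n+\gamma}\tau=+\,g^{pq}A_{pqk}u_\gamma^k$, while the paper's formula $H_{ij\gamma}=-A_{pqk}u_i^pu_j^qu_\gamma^k$ forces $H_\gamma=-\,g^{pq}A_{pqk}u_\gamma^k$; you would then have proved $d^V\tau=-\eta$, the negative of the statement. The resolution is that the same frame convention which produces the minus sign in $H_{ij\gamma}$ fixes the vertical frame to be $\mathbf{e}_{n+\gamma}=-\,u_\gamma^k F\,\partial/\partial y^k$. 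One way to see this without consulting \cite{FL}: in natural coordinates the Cartan connection is the Chern connection plus $A^i_{jk}\,\delta y^k/F$, so the Cartan endomorphism is $H^i_j=A^i_{jk}\,\delta y^k/F$; evaluating this against the adapted frame and comparing with the quoted formula $H_{ij\gamma}=-A_{pqk}u_i^pu_j^qu_\gamma^k$ pins the natural components of $\mathbf{e}_{n+\gamma}$ to be $-u_\gamma^k F$. (Equivalently, the paper's convention $J=-\omega^\alpha\otimes\mathbf{e}_{n+\alpha}+\omega^{n+\alpha}\otimes\mathbf{e}_\alpha$ gives $\mathbf{e}_{n+\alpha}=-J\mathbf{e}_\alpha$, which supplies the same minus sign.) With the vertical frame fixed consistently, the two minus signs cancel, $\mathbf{e}_{n+\gamma}\tau=H_\gamma$, and $d^V\tau=\eta$ follows. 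Add this one clarification and your proof is complete.
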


The following property of the Cartan-type form is critical for us.
\begin{prop}[\cite{Li}]
The exterior differentiation of $\eta$ is given by
\begin{equation*}
d\eta=-\tr [R^{\rm Ch}].
\end{equation*}
Then $d\eta$ has the local formula
\begin{equation}\label{deta}
d\eta=d(H_{\gamma}\omega^{n+\gamma})=-\frac{1}{2}R_{i~kl}^{~i}\omega^k\wedge\omega^l-P_{i~k\gamma}^{~i}\omega^k\wedge\omega^{n+\gamma}.
\end{equation}
\end{prop}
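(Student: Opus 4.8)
The plan is to reduce the statement to two elementary facts about traces: that the trace of the Chern connection matrix is $-\eta$, and that the trace of the quadratic (Lie-bracket) term of the curvature vanishes identically. No analysis is involved; everything is a direct computation once the conventions are fixed.

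First I would take the trace of the second structure equation in Lemma \ref{sturcture eq}, namely $\bm{\omega}+\bm{\omega}^t=-2H$. Since a matrix and its transpose have the same trace, this gives $2\,\tr[\bm{\omega}]=-2\,\tr[H]$, that is
\begin{equation*}
\omega_i^i=-\,\tr[H]=-\eta,
\end{equation*}
using $\eta=\tr[H]=H_{ii\gamma}\omega^{n+\gamma}=H_\gamma\omega^{n+\gamma}$. This identifies $\eta$ with minus the trace of the connection form, which is exactly the quantity whose exterior derivative controls the trace of the curvature.

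Next, from $R^{\rm Ch}=(\nabla^{\rm Ch})^2$ and the convention $\nabla^{\rm Ch}\mathbf{e}_j=\omega_j^i\otimes\mathbf{e}_i$, the curvature forms are $\Omega_j^i=d\omega_j^i-\omega_j^k\wedge\omega_k^i$. Taking the trace over $i=j$ yields $\tr[R^{\rm Ch}]=\Omega_i^i=d\omega_i^i-\omega_i^k\wedge\omega_k^i$. The quadratic term vanishes: relabelling the summation indices $i\leftrightarrow k$ and using the anticommutativity of the wedge product of one-forms, $\sum_{i,k}\omega_i^k\wedge\omega_k^i=-\sum_{i,k}\omega_i^k\wedge\omega_k^i$, so it is zero. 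Hence $\tr[R^{\rm Ch}]=d\omega_i^i=d(-\eta)=-d\eta$, which is precisely the coordinate-free identity $d\eta=-\tr[R^{\rm Ch}]$.

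Finally, for the local formula \eqref{deta} I would take the trace of the expansion $\Omega_j^i=\tfrac12 R_{j~kl}^{~i}\omega^k\wedge\omega^l+P_{j~k\gamma}^{~i}\omega^k\wedge\omega^{n+\gamma}$ over $i=j$ and substitute into $d\eta=-\Omega_i^i$, producing the two-term expression in $R_{i~kl}^{~i}$ and $P_{i~k\gamma}^{~i}$; the left-hand equality $d\eta=d(H_\gamma\omega^{n+\gamma})$ is immediate from $\eta=H_\gamma\omega^{n+\gamma}$. The only delicate point — and the one I would flag as the main (if modest) obstacle — is the sign bookkeeping for the curvature convention and the vanishing of the quadratic term; once that cancellation is in hand, the proposition follows by pure trace manipulation.
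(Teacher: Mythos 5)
Your proof is correct. The paper itself does not prove this proposition --- it is quoted from \cite{Li} --- but your argument is exactly the standard derivation that the cited source uses: trace the structure equation $\bm{\omega}+\bm{\omega}^t=-2H$ to get $\tr[\bm{\omega}]=-\eta$, observe that $\tr[\bm{\omega}\wedge\bm{\omega}]=0$ by the index-relabelling/anticommutativity cancellation, so that $\tr[R^{\rm Ch}]=d\,\tr[\bm{\omega}]=-d\eta$, and then read off the local formula from the expansion of $\Omega_j^i$. Your worry about sign conventions is also harmless for precisely the reason you identify: whichever sign the quadratic term carries in the matrix convention, its trace vanishes, so the identity $d\eta=-\tr[R^{\rm Ch}]$ is convention-independent once $\tr[\bm{\omega}]=-\eta$ is fixed by the paper's structure equation.
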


From Lemma \ref{lem deta}, one has the following corollary.
\begin{mcor}
The differential of $\tau$ is given by
\begin{equation}
d\tau=\tau_{|i}\omega^i+\eta=\tau_{|\alpha}\omega^\alpha+\tilde{S}\omega^n+\eta,\label{dtau}
\end{equation}
where we denote $\tau_{|i}:=\mathbf{e}_i (\tau)$, and $\tilde{S}:=\tau_{|n}=\mathbf{e}_n (\tau)$.
\end{mcor}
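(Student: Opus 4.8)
The plan is to read off the stated formula \eqref{dtau} by decomposing the exact one-form $d\tau$ along the adapted coframe and recognizing its vertical block through Lemma \ref{lem deta}. First I would expand the total differential of the function $\tau$ on $SM$ in the dual frame $\{\omega^1,\ldots,\omega^n,\omega^{n+1},\ldots,\omega^{2n-1}\}$ as
\[
d\tau=\sum_{i=1}^n(\mathbf{e}_i\tau)\,\omega^i+\sum_{\gamma=1}^{n-1}(\mathbf{e}_{n+\gamma}\tau)\,\omega^{n+\gamma},
\]
which is nothing more than $d\tau=(\mathbf{e}_A\tau)\omega^A$ split into the horizontal indices $1\le i\le n$ and the vertical indices $\gamma=1,\ldots,n-1$. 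Writing $\tau_{|i}:=\mathbf{e}_i\tau$ packages the horizontal sum as $\tau_{|i}\omega^i$.

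Next I would identify the vertical sum with the Cartan-type form $\eta$. Since $\tau$ is a $0$-form, the dual connection $\D{T^*(SM)}$ acts on it as the plain directional derivative, $\D{T^*(SM)}_{\mathbf{e}_{n+\alpha}}\tau=\mathbf{e}_{n+\alpha}(\tau)$, so from the definition $d^{V}=\omega^{n+\alpha}\wedge\D{T^*(SM)}_{\mathbf{e}_{n+\alpha}}$ one obtains $d^{V}\tau=(\mathbf{e}_{n+\gamma}\tau)\,\omega^{n+\gamma}$, which is precisely the vertical part appearing above. Lemma \ref{lem deta} then gives $d^{V}\tau=\eta$, whence $d\tau=\tau_{|i}\omega^i+\eta$. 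Finally, peeling off the $i=n$ term from the horizontal sum and abbreviating $\tilde{S}:=\tau_{|n}=\mathbf{e}_n\tau$ produces the second equality $d\tau=\tau_{|\alpha}\omega^\alpha+\tilde{S}\omega^n+\eta$.

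I do not expect a genuine obstacle here: the corollary is essentially a repackaging of Lemma \ref{lem deta} together with the frame decomposition of $d\tau$. The single point I would verify explicitly is that $d^{V}\tau$ really coincides with the vertical component of the full differential $d\tau$; this hinges on the elementary fact that any linear connection agrees with the exterior derivative on functions, so that no connection coefficients enter the computation. Once that is granted, everything reduces to the routine bookkeeping split of the adapted coframe into its horizontal and vertical blocks.
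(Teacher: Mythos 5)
Your proposal is correct and matches the paper's (implicit) argument exactly: the paper presents this as an immediate corollary of Lemma \ref{lem deta}, obtained by expanding $d\tau$ in the adapted coframe and identifying the vertical block $(\mathbf{e}_{n+\gamma}\tau)\,\omega^{n+\gamma}=d^{V}\tau=\eta$, which is precisely your computation. Your explicit check that the dual connection reduces to the directional derivative on the $0$-form $\tau$ is the right point to verify and is the only nontrivial content here.
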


It is clear that $\tau_|:=\tau_{|i}\omega^i$ defines a section of the dual bundle of $J\mathcal{F}$. Therefore $\bar{\tau}_|:=\tau_{|\alpha}\omega^{n+\alpha}$ is a well defined section of the dual bundle $\mathcal{F}^*$ of the vertical bundle $\mathcal{F}$.

In literature, $\mathbf{S}:=F\tilde{S}=\mathbf{G}(\tau)$ defined on $TM_0$ is called the $S$-curvature of the Finsler manifold $(M,F)$.
The $S$-curvature is also introduced by Zhongmin Shen. For more results related to the
$S$-curvature, one refers to \cite{ChernShen,SS,Shenbook1,Shenbook2,Shen} and the references in them.

Taking the exterior differentiation of (\ref{dtau}), we obtain
\begin{equation}\label{d2tau1}
\begin{split}
0=&d^2\tau=d(\tau_{|i}\omega^i)+d\eta=d\tau_{|i}\wedge\omega^i+\tau_{|i}d\omega^i+d\eta=(d\tau_{|i}-\tau_{|j}\wedge\omega_i^j)\wedge\omega^i+d\eta\\
=&(\nabla^{\rm Ch}\tau_{|})_i\wedge\omega^i+d\eta
=\tau_{|i|j}\omega^j\wedge\omega^i-\tau_{|i;\gamma}\omega^i\wedge\omega^{n+\gamma}+d\eta\\
=&\tau_{|\alpha|\beta}\omega^\beta\wedge\omega^\alpha+\tau_{|n|\beta}\omega^\beta\wedge\omega^n+\tau_{|\alpha|n}\omega^n\wedge\omega^\alpha
-\tau_{|\alpha;\gamma}\omega^\alpha\wedge\omega^{n+\gamma}\\
&-\tau_{|n;\gamma}\omega^n\wedge\omega^{n+\gamma}+d\eta,
\end{split}
\end{equation}
where $\tau_{|i|j}$ and $\tau_{|i;\gamma}$ denote the coefficients of the covariant differential of $\tau_|$ with respect to the Chern connection.

One easily verifies the following identities
\begin{equation}\label{coefficients}
\begin{split}
  &\tau_{|n|\alpha}:=i_{\mathbf{e}_\alpha}(d\tau_{|n}-\tau_{|i}\omega_n^i)=i_{\mathbf{e}_\alpha}(d\tilde{S}-\tau_{|\beta}\omega_n^\beta)=\mathbf{e}_\alpha(\tilde{S})=\tilde{S}_{|\alpha},\\
  &\tau_{|n;\alpha}:=i_{\mathbf{e}_{n+\alpha}}(d\tau_{|n}-\tau_{|i}\omega_n^i)=\mathbf{e}_{n+\alpha}(\tilde{S})+\tau_{|\beta}\delta^\beta_\alpha=\tilde{S}_{,\alpha}+\tau_{|\beta}\delta^\beta_\alpha,\\
  &\tau_{|\alpha;\beta}:=i_{\mathbf{e}_{n+\alpha}}(d\tau_{|\alpha}-\tau_{|i}\omega_\alpha^i)=i_{\mathbf{e}_{n+\alpha}}(d\tau_{|\alpha}-\tau_{|\gamma}\omega_\alpha^\gamma)-\tilde{S}\delta_{\alpha\beta}=\tau_{|\alpha,\beta}-\tilde{S}\delta_{\alpha\beta},
\end{split}
\end{equation}
where $\tau_{|\alpha,\beta}$ denote the vertical coefficients of $\bar{\nabla}^{\rm Ch}\bar{\tau}_{|}$.

Plugging (\ref{deta}) and (\ref{coefficients}) into (\ref{d2tau1}), we have
\begin{equation}\label{d2tau2}
  \begin{split}
    0=&-\frac{1}{2}(\tau_{|\alpha|\beta}-\tau_{|\beta|\alpha})\omega^\alpha\wedge\omega^\beta+\tilde{S}_{|\alpha}\omega^\alpha\wedge\omega^n+\tau_{|\alpha|n}\omega^n\wedge\omega^\alpha\\
    &-(\tau_{|\gamma,\mu}-\tilde{S}\delta_{\gamma\mu})\omega^\gamma\wedge\omega^{n+\mu}
    -(\tilde{S}_{,\mu}+\tau_{|\mu})\omega^n\wedge\omega^{n+\mu}
    -\frac{1}{2}R_{i~\alpha\beta}^{~i}\omega^\alpha\wedge\omega^\beta\\
    &-R_{i~\alpha n}^{~i}\omega^\alpha\wedge\omega^n-P_{i~n\mu}^{~i}\omega^n\wedge\omega^{n+\mu}-P_{i~\gamma\mu}^{~i}\omega^\gamma\wedge\omega^{n+\mu}.
  \end{split}
\end{equation}
The following identities are derived from (\ref{d2tau2})
\begin{align}
  &\tau_{|\gamma,\mu}-\tilde{S}\delta_{\gamma\mu}+P_{i~\gamma\mu}^{~i}=0,\label{basic equ 3}\\
  &\tilde{S}_{,\mu}+\tau_{|\mu}+P_{i~n\mu}^{~i}=0.\label{basic equ 4}
\end{align}
Substituting the formula of $\tau_{|\mu}$ from (\ref{basic equ 4}) into (\ref{basic equ 3}), we obtain by using (\ref{coefficients E})
\begin{equation}\label{hessian S}
  \tilde{S}_{,\gamma,\mu}+\tilde{S}\delta_{\gamma\mu}=P_{i~\gamma\mu}^{~i}-P_{i~n\gamma,\mu}^{~i}=P_{i~\gamma\mu}^{~i}+J_{\gamma,\mu}^{~i}=E_{\gamma\mu}.
\end{equation}

Using natural frames, the formula (\ref{hessian S}) is known as the relation between the S-curvature and the $\mathbf{E}$-curvature as below
\begin{equation}\label{S E natural frame}
  F\cdot \mathbf{S}_{y^iy^j}=E_{ij}.
\end{equation}

Now we present a proof of Theorem \ref{theo 1}.
\begin{proof}[Proof of Theorem \ref{theo 1}]
  Using the Levi-Civita connection along each fiber of $SM$, the equation (\ref{hessian S}) is reformulated as
  \begin{equation}\label{hessian S'}
    {\rm Hess}_{\dot{g}}\tilde{S}_{\gamma\mu}+\sum_{\nu}H_{\gamma\mu\nu}\tilde{S}_{,\nu}+\tilde{S}\delta_{\gamma\mu}=E_{\gamma\mu},
  \end{equation}
  where ${\rm Hess}_{\dot{g}}$ denotes the Hessian operator on each fiber of $SM$ with respect to the metric $\dot{g}$.
  By taking trace, we obtain from ({\ref{hessian S'}}) the following linear elliptic equation
  \begin{equation}\label{laplace 1}
    \Delta_{\dot{g}}\tilde{S}+\dot{g}(\eta,d^V\tilde{S})+(n-1)\tilde{S}=\mathsf{e}.
  \end{equation}
  In \cite{Li}, we elaborated that $\eta=-(n-1)\hat{T}$, where $\hat{T}$ is the Tchebychev form in centro-affine differential geometry. Therefore (\ref{laplace 1}) along each fiber is
   \begin{equation}\label{laplace 2}
    \Delta_{\dot{g}}\tilde{S}-(n-1)\dot{g}(\hat{T},d^V\tilde{S})+(n-1)\tilde{S}=\mathsf{e}.
  \end{equation}
  Assume that $\mathsf{e}$ is a function on $M$, then $f=\tilde{S}-\frac{1}{n-1}\mathsf{e}$ solves the following equation along each fiber of $SM$
  \begin{equation}\label{Schrodinger equ}
    \Delta_{\dot{g}}f-(n-1)\dot{g}(\hat{T},d^Vf)+(n-1)f=0.
  \end{equation}
  Let $S_xM$ and $S^*_xM$ be the indicatrices of the Minkowski spaces $(T_xM,F_x)$ and $(T^*_xM,F^*_x)$, respectively, for any $x\in M$.
  It is known that the Legendre transformation $\mathfrak{L}_x$ gives a diffeomorphism between $S_xM$ and $S^*_xM$,  which preserves the affine metrics and changes the sign of the Tchebychev forms (cf. \cite{BaoChernShen}).
  By Satz 3.1 in \cite{Schneider1}, due to Blaschke and Schneider, the solutions of the equation (\ref{Schrodinger equ}) on $S^*_xM$ are of the form
  \begin{equation}
    \langle\xi_x,d^{T^*_xM}F^*_x\rangle|_{S^*_xM},
  \end{equation}
  where $\xi_x=\xi_i(x)\frac{\partial}{\partial p_i}\in T^*_xM$  are constant co-vectors.
  The differential of the norm $F^*_x$ is simply given by $$d^{T^*_xM}F^*_x=F^*_{p_i}(x,p)dp_i=\frac{1}{F^*_x(p)}g^{ij}(p)p_jdp^i.$$
  Thus the solutions of (\ref{Schrodinger equ}) on $S_xM$ are given by
  \begin{align*}
    f&=\mathfrak{L}^*\langle\xi_x,d^{T^*_xM}F^*_x\rangle=\mathfrak{L}^*\left[\frac{1}{F^*_x(p)}\xi_i(x)g^{ij}(p)p_j\right]\\
    &=\frac{1}{F^*_x(\mathfrak{L}_xy)}\xi_i(x)g^{ij}(\mathfrak{L}_xy)(\mathfrak{L}_xy)_j=\frac{1}{F_x(y)}\xi_i(x)y^i.
  \end{align*}
  Therefore the S-curvature is weakly isotropic
  \begin{equation}
   \mathbf{S}=F\tilde{S}=F\left(f+\frac{1}{n-1}\mathsf{e}\right)=\frac{1}{n-1}F\mathsf{e}+\xi_iy^i.
  \end{equation}
  By differentiation
  \begin{equation}
    \mathbf{S}_{y^iy^j}=\frac{1}{n-1}\mathsf{e}F^{-1}\mathbf{h}_{ij}.
  \end{equation}
  Recall (\ref{S E natural frame}), we complete the proof.
\end{proof}

\begin{remark}
  After the manuscript was finished and announced online, Professor Crampin kindly informed us his works in \cite{Cra2}. Inspired by our early work \cite{Li}, he first
  proved a global result of Minkowski norms, which is equivalent to (\ref{Schrodinger equ}). Using this result he proved that $\mathsf{e}=0$ implies $\mathbf{E}=0$. Then he proved a landsberg space for which the Berwald scalar curvature vanishes is a Berwald space.

  Like our early works in \cite{Li}, we adopt different methods from \cite{Cra2} and emphasize the geometry of fibers of the unit tangent bundle. We hope this viewpoint will be helpful for further studies.
\end{remark}

\end{document}